\def\ep{{\varepsilon}}
\def\R{\mathbb R}
\newtheorem{theo}{\textbf{Theorem}}[section]
\newtheorem{lem}[theo]{\textbf{Lemma}}
\newtheorem{cor}[theo]{\textbf{Corollary}}
\newtheorem{assumption}[theo]{\textbf{Assumption}}
\newtheorem{rem}[theo]{\textbf{Remark}}
\title{Slowing Allee effect vs. accelerating heavy tails in monostable reaction diffusion equations}
\date{}
\begin{document}

\maketitle
\begin{center}
{\large\bf Matthieu Alfaro \footnote{ I3M, Universit\'e de
Montpellier 2, CC051, Place Eug\`ene Bataillon, 34095 Montpellier
Cedex 5, France. E-mail: matthieu.alfaro@univ-montp2.fr}.}\\
[2ex]

\end{center}




\vspace{10pt}

\begin{abstract} We focus on the spreading properties of solutions of monostable reaction-diffusion equations.
Initial data are assumed to have heavy tails, which tends to accelerate the invasion phenomenon. On the other hand,
 the nonlinearity involves a weak Allee effect, which tends to slow down the process. We study the balance between the two effects. For algebraic tails, we prove the
exact separation between \lq\lq no acceleration and acceleration''. This implies in particular that, for tails exponentially
unbounded but lighter than algebraic, acceleration never occurs in
presence of an Allee effect. This is in sharp contrast with the
KPP situation \cite{Ham-Roq-10}. When algebraic tails lead to acceleration despite the Allee effect, we also give an accurate estimate of the position of the level sets.
\\

\noindent{\underline{Key Words:} reaction diffusion equations, spreading properties, heavy tails, Allee effect, acceleration.}\\

\noindent{\underline{AMS Subject Classifications:} 35K57, 35B40, 92D25.}
\end{abstract}

\maketitle
\section{Introduction} \label{s:intro}

In this paper we are concerned with the {\it spreading properties}
of $u(t,x)$ the solution of the monostable reaction-diffusion
equation
\begin{equation}\label{eq}
\partial _t u=\partial _{xx}u+f(u),\quad t>0,\, x\in \R,
\end{equation}
when the initial data is {\it front-like} and has a {\it heavy
tail}. When the nonlinearity $f$ is of the Fisher-KPP type, Hamel
and Roques \cite{Ham-Roq-10} proved that such solutions spread by
accelerating and precisely estimated the position of the level
sets of $u(t,\cdot)$ as $t\to \infty$, revealing that they are
propagating exponentially fast. The goal of the present paper is
to introduce a  {\it weak Allee effect}, by letting $f'(0)=0$, and
study the balance between such a slowing effect and the
acceleration that heavy tails tend to induce. We prove that, for
data with heavy tails but lighter than algebraic, acceleration is
cancelled by any weak Allee effect (even if very small). For
algebraic tails, we prove the exact separation between \lq\lq
no acceleration and acceleration''. In the latter case, we estimate the
position of the level sets of $u(t,\cdot)$ as $t\to \infty$,
revealing that they are propagating polynomially fast.

\medskip

\noindent{\bf Heavy tails in the Fisher-KPP context.} In some
population dynamics models, a common assumption is that the growth
is only slowed down by the intra-specific competition, so that the
growth per capita is maximal at small densities. This leads to
consider the reaction diffusion equation \eqref{eq} --- where the
quantity $u$ stands for a normalized population density--- with
nonlinearities $f$ of the Fisher-KPP type, namely
$$
f(0)=f(1)=0, \quad\text{ and } \quad 0<f(s)\leq f'(0)s, \quad \forall s\in(0,1).
$$
The simplest example $f(s)=s(1-s)$ was first introduced by Fisher
\cite{Fis-37} and Kolmogorov, Petrovsky and Piskunov
\cite{Kol-Pet-Pis-37} to model the spreading of advantageous genetic
features in a population.

In such situations, it is well known that the way the front like
initial data --- in the sense of Assumption \ref{ass:initial}---
approaches zero at $+\infty$ is of dramatic importance on  the
propagation, that is the invasion for large times of the unstable
steady state $u\equiv 0$ by the stable steady state $u\equiv 1$. To describe
such phenomenon, one can use the notion of spreading speed (if it
exists): for a given front like initial data, we say that
$c=c(u_0)\in\R$ is the spreading speed of the solution $u(t,x)$ of
\eqref{eq} if
$$
\min _{x\leq vt} u(t,x)\to 1 \text{ as  } t\to \infty \text{ if }
v<c, \quad \max _{x\geq vt} u(t,x)\to 0 \text{ as  } t\to \infty
\text{ if } v>c.
$$

For initial data with a exponentially bounded tail (or light tail) at
$+\infty$, there is a spreading speed $c\geq c^*:=2\sqrt{f'(0)}$
which is selected by the rate of decay of the tail. More
precisely, if $u_0(x)=\mathcal O(e^{-\sqrt{f'(0)}x})$ as $x\to
+\infty$ (including left compactly supported initial data) then
$c=c^*=2\sqrt{f'(0)}$, whereas if $u_0(x)$ decays like $e^{-\lambda
x}$, $0<\lambda<\sqrt{f'(0)}$, then $c=\lambda +\frac{f'(0)}\lambda
>c^*$. There is a large literature on such results and improvements.
Let us mention among others the works \cite{Fis-37},
\cite{Kol-Pet-Pis-37}, \cite{McKea-75}, \cite{Had-Rot-75},
\cite{Kam-76}, \cite{Uch-78}, \cite{Aro-Wei-78}, \cite{Kis-Ryz-01},
\cite{Ber-Ham-Nad-05, Ber-Ham-Nad-10}, \cite{Ber-Ham-Nad-08}, \cite{Ham-Sir-10} and the references therein. More recently, the authors in
\cite{Ham-Nad-12} considered the case when the initial data is
trapped between two exponentially decreasing tails, revealing
further properties which enforce to reconsider the notion of
spreading speed.

On the other hand, Hamel and Roques \cite{Ham-Roq-10} recently
considered the case of initial data with heavy tail (or not
exponentially bounded), namely
$$
\lim_{x\to +\infty} u_0(x)e^{\ep x}=0, \quad \forall \ep >0.
$$
Typical examples are algebraic tails but also \lq\lq lighter heavy
tails'', see \eqref{ex1} or \eqref{ex2}, and \lq\lq very heavy tails'', see \eqref{lourd}. In this context, it is then proved in \cite{Ham-Roq-10} that, for
any $\lambda\in(0,1)$, the $\lambda$ level set of $u(t,\cdot)$
travel infinitely fast as $t\to \infty$, thus revealing an
acceleration phenomenon (which in particular prevents the
existence of a spreading speed). Also, the location of these level
sets is estimated in terms of the heavy tail of the initial data.

\medskip

Related results exist for the integro-differential
equation of the KPP type
\begin{equation}\label{eq-garnier}
\partial _t u=J*u-u+f(u),
\end{equation}
where the kernel $J$ allows to take into account rare long-distance dispersal events. Here, the
initial data is typically compactly supported and this is the tail of the dispersion kernel $J$ that determines how
fast is the invasion. If the kernel is exponentially bounded, then propagation occurs at a constant speed, as can be
 seen in \cite{Wei-82}, \cite{Cov-preprint}, \cite{Cov-Dup-07} among others.
 More recently, Garnier \cite{Gar-11}  proved an acceleration phenomenon for kernels which are not
  exponentially bounded, so that \eqref{eq-garnier} is an accurate model to explain the Reid's paradox of rapid plant migration (see \cite{Gar-11} for references on this issue).

To conclude on acceleration phenomena in Fisher-KPP type equations, let us mention the case
when the Laplacian is replaced by the generator of a Feller semigroup, a typical example being
\begin{equation}\label{eq-Cab-Roq-13}
\partial _t u=-(-\partial _{xx})^\alpha u+f(u), \quad 0<\alpha<1,
\end{equation}
where $-(-\partial _{xx})^{\alpha}$ stands for the Fractional Laplacian, whose symbol is $\vert\xi\vert ^{2\alpha}$.
 In this context, it was proved by Cabr\'e and Roquejoffre \cite{Cab-Roq-13} that, for a compactly supported
 initial data, acceleration always occurs, due to the algebraic tails of the Fractional Laplacian.

\medskip

\noindent{\bf Heavy tails vs. Allee effect.} In population dynamics, due for instance to
 the difficulty to find mates or to the lack of genetic diversity at low density, the KPP
  assumption is unrealistic in some situations. In other words, the growth per capita is no longer maximal at small densities, which is referred to as an Allee effect.

 \begin{rem} In the sequel, by Allee effect, we always mean  weak Allee effect. To take
  into account a {\it strong} Allee effect, for which the growth of the population is negative
   at small densities, the common nonlinearity is of the bistable type. In such a framework,
   heavy tails typically do not lead to acceleration  \cite{Bat-Fif-Ren-Wan-97}, \cite{Che-97}, \cite{Gui-Zha-preprint}, \cite{Ach-Kue-preprint}.
\end{rem}

In this Allee effect context, if $f'(0)>0$ the situation --- even if more complicated--- is more or less comparable to the
 KPP situation: most of the above qualitative results remain valid. On the other hand,
 much less is known in the degenerate situation where $f'(0)=0$, for which typical nonlinearities take the form
$$
f(s)=rs^\beta(1-s^\delta), \quad r>0,\, \beta >1,\, \delta >0.
$$

In this work, we focus on the local equation \eqref{eq} with such an
Allee effect. The first question which arises is whether or not propagation (in
the sense of invasion) still occurs for equation \eqref{eq}. It turns out that, in some situations, quenching may occur.
This happens typically when a compactly supported initial data is
too small (in some $L^{1}$ sense) and $\beta>3$. On the other hand, any compactly supported initial data
not too small (in some $L^{1}$ sense) will lead to invasion in the sense that
$$
\lim_{t\to \infty} u(t,x)=1, \quad \text{ locally uniformly in
space}.
$$
Such results were proved by \cite{Xin-93}, \cite{Beb-Li-Li-97}, \cite{Zla-05}. See also earlier works
\cite{Kan-64}, \cite{Roq-97}, \cite{Xin-00} for the case when the
nonlinearity is of the ignition type.

Since we will consider front like initial data (see below
for a precise statement), invasion will always occur. A natural
question is therefore to study the balance between the Allee
effect (whose strength is measured by $\beta >1$) which tends to
slow down the invasion process, and heavy tails which tend to
accelerate. Let us mention some numerical results
\cite{She-Mar-96}, \cite{Kay-She-Leo-01} for the nonlinearity
$f(s)=s^{2}(1-s)$. Also, for nonlinearities $f(s)=s^\beta (1-s)$
and algebraic initial data, matched asymptotic expansions
\cite{Nee-Bar-99}, \cite{Lea-Nee-Kay-02} have been used to determine if
the solution travels with finite or infinite speed.

In this work, we provide a rigorous description of the competition between the Allee
effect and the heavy tail for equation \eqref{eq}. For algebraic tails, we prove the
separation between acceleration or not (depending on the strength of the Allee effect).
Also, when acceleration occurs, we precisely estimate the location of the level sets of the solution.
This separation for algebraic tails immediately implies that acceleration never occurs for lighter tails
 (even if the Allee effect is very small), and always occurs for heavier tails (even if the Allee effect is very large). This is in sharp contrast with the KPP situation \cite{Ham-Roq-10}.

\medskip
As far as the integro-differential equation \eqref{eq-garnier} with an
Allee effect is concerned, the question of propagation or not has been recently
studied by \cite{Zha-11}, \cite{Zha-Li-Wan-12}. One may then wonder what is the exact balance between the Allee effect and dispersion kernels with heavy tails. We refer to
\cite{Alf-Cov-preprint} for first results in this direction.

Last, notice that the question of acceleration or not in the nonlocal equation \eqref{eq-Cab-Roq-13} with
 an Allee effect has been recently solved by  Gui and Huan \cite{Gui-Hua-preprint}.
 Considering $\partial _t u=-(-\partial _{xx})^\alpha u+u^\beta (1-u)$, they show that:
 for $0<\alpha\leq 1/2$ acceleration always occurs whatever $\beta >1$ by comparing with
  an ignition type problem; next, for $1/2<\alpha<1$, acceleration occurs if and only if $\beta <\frac{2\alpha}{2\alpha -1}$. See \cite{Gui-Hua-preprint} for more precise results.

\section{Assumptions and main results}\label{s:results}

Through this work, and even if not recalled, we always assume the following
 on the initial condition. Notice that, in each result, we clearly state the heavy tail assumption which is therefore not included below.

\begin{assumption}(Initial condition)\label{ass:initial} The initial condition $u_0:\R\to \left[0,1\right]$ is uniformly continuous and asymptotically front-like, in the sense that
\begin{equation}
\label{front-like}
u_0>0 \; \text{ in } \R,\quad \liminf _{x\to -\infty} u_0(x)>0,\quad \lim _{x\to +\infty} u_0(x)=0.
\end{equation}
\end{assumption}

Even if not recalled, we always assume the following on the nonlinearity $f$. Notice
 that, in  each result, we clearly quantify the degeneracy assumption (Allee effect) which is therefore not included below.

\begin{assumption}(Degenerate monostable nonlinearity)\label{ass:f} The nonlinearity $f:\left[0,1 \right]\to \R$ is of the class $C^{1}$, and is of the monostable type, in the sense that
$$
f(0)=f(1)=0, \quad f>0 \; \text{ in } (0,1).
$$
The steady state 0 is degenerate, that is $f'(0)=0$.
\end{assumption}

The simplest example of such a degenerate monostable nonlinearity is
 given by $f(s)=s^\beta(1-s)$, with $\beta >1$ (in contrast with the KPP situation $\beta =1$).

In the sequel, we always denote by $u(t,x)$ the solution of \eqref{eq} with initial condition $u_0$. From the above assumptions and the comparison principle, we immediately get
$$
0<u(t,x)<1 \quad \forall (t,x)\in(0,\infty)\times\R.
$$
Also, as announced in the introduction, the state $u\equiv 1$ does invade the whole line
 $\R$ as $t\to \infty$. Indeed, define $\eta:=\inf _{x\leq 0} u_0(x)>0$. In view of \cite[Theorem 1.1]{Zla-05},
 the solution $v(t,x)$ of \eqref{eq} with initial data $v_0(x)=\eta \chi _{(-\infty,0)}(x)$ satisfies $\lim _{t\to \infty} \inf _{x\leq \gamma t} v(t,x)=1$
 for some $\gamma >0$. From $v_0\leq u_0$ and the comparison principle,  the same holds true for $u(t,x)$:
\begin{equation}
\label{invasion}
\lim _{t\to \infty} \inf _{x\leq \gamma t} u(t,x)=1,
\end{equation}
so that propagation is at least linear. Notice also that the proof of \cite[Theorem 1.1 part a)]{Ham-Roq-10} does not require
 the KPP assumption and can then be reproduced to get
\begin{equation}
\label{zero-a-droite}
\lim _{x\to+\infty}u(t,x)=0,\quad \forall t\geq 0.
\end{equation}

In order to state our results we define, for any $\lambda \in (0,1)$ and $t\geq 0$,
$$
E_\lambda(t):=\{x\in\R:\, u(t,x)=\lambda\}
$$
the $\lambda$ level set of $u(t,\cdot)$. In view of \eqref{invasion} and \eqref{zero-a-droite}, for any $\lambda\in(0,1)$, there is a time $t_\lambda >0$ such that
\begin{equation}
\label{nonvide}
\emptyset \neq E_\lambda(t) \subset (\gamma t, +\infty), \quad \forall t\geq t_\lambda.
\end{equation}

Our first main result states that, for algebraic initial tail, acceleration can be blocked by a strong enough Allee effect.

\begin{theo}(Cancelling acceleration by Allee effect)\label{th:no-acc} Let $\alpha >0$ and $\beta >1$ be such that
\begin{equation}\label{alpha-beta-no-acc}
 \beta \geq 1+\frac 1 \alpha.
\end{equation}
Assume that there are $C>0$ and $x_0>1$ such that
\begin{equation}
\label{algebraic-no-acc}
u_0(x)\leq \frac{C}{x^\alpha},\quad \forall x\geq x_0.
\end{equation}
Assume that there are $r>0$, $\delta >0$ and $s_0\in(0,1)$ such that
\begin{equation}
\label{nonlinearity-no-acc}
f(s)\leq r s^\beta(1-s^\delta),\quad \forall 0\leq s\leq s_0.
\end{equation}
Then, there is a speed $c>0$ such that, for any $\lambda \in(0,1)$,
there is a time $T_\lambda \geq t_\lambda$ such that
\begin{equation}
\label{no-acc}
\emptyset \neq E_\lambda(t) \subset (\gamma t, ct), \quad \forall t\geq T_\lambda.
\end{equation}
\end{theo}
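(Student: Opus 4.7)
The plan is to dominate $u$ globally in time by an explicit finite-speed supersolution of the form
$$\bar u(t,x) := \min\!\left(1,\ \frac{A}{(x - ct + B)^\alpha}\right),$$
for constants $A, B, c > 0$ to be chosen. Once such a $\bar u$ is built and the parabolic comparison principle yields $u(t,\cdot) \leq \bar u(t,\cdot)$, any point $x \in E_\lambda(t)$ must lie on the smooth branch (since $\lambda < 1$) and hence satisfies $x \leq ct + (A/\lambda)^{1/\alpha} - B$. Combined with (\ref{invasion})--(\ref{nonvide}) and a mild increase of $c$ to absorb the $O(1)$ shift, this is exactly (\ref{no-acc}).

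I first pick $A$ and $B$ large enough that $A^{1/\alpha} - B \geq x_0$ and $A/(x+B)^\alpha \geq C/x^\alpha$ for every $x \geq A^{1/\alpha} - B$ (a direct consequence of choosing, e.g., $A \geq (B + C^{1/\alpha})^\alpha$ with $B$ large). Together with $\bar u(0,\cdot) \equiv 1$ on $\{x \leq A^{1/\alpha} - B\}$ and hypothesis (\ref{algebraic-no-acc}), this yields $\bar u(0,\cdot) \geq u_0$ on $\R$. For the differential inequality, I set $z := x - ct + B$ on the smooth branch, where $\bar u = A z^{-\alpha}$ and a direct computation gives
$$\partial_t \bar u - \partial_{xx}\bar u - f(\bar u) \geq c\alpha A\, z^{-\alpha-1} - \alpha(\alpha+1) A\, z^{-\alpha-2} - f(A z^{-\alpha}).$$
In the tail region $\bar u \leq s_0$, hypothesis (\ref{nonlinearity-no-acc}) gives $f(\bar u) \leq r A^\beta z^{-\alpha\beta}$, and after multiplication by $z^{\alpha+1}/A$ the supersolution inequality reduces to
$$c\alpha \geq \alpha(\alpha+1)\, z^{-1} + r A^{\beta-1}\, z^{\alpha+1-\alpha\beta}.$$
The assumption (\ref{alpha-beta-no-acc}) is precisely the statement $\alpha + 1 - \alpha\beta \leq 0$, so for $z$ beyond some threshold $Z_0$ the right-hand side is bounded above by $r A^{\beta-1}/\alpha + o(1)$ and any choice $c > r A^{\beta-1}/\alpha$ works. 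On the bounded intermediate slab $z \in [A^{1/\alpha}, Z_0]$, where possibly $\bar u > s_0$ and hypothesis (\ref{nonlinearity-no-acc}) is silent, I replace $f$ by $M := \max_{[s_0, 1]} f$: both the drift $c\alpha A z^{-\alpha-1}$ and the diffusion term are uniformly controlled from both sides on this compact range of $z$, so enlarging $c$ once more restores the inequality. Finally, at the Lipschitz kink $z = A^{1/\alpha}$, $\bar u$ is the pointwise minimum of two classical supersolutions (the constant $1$, which is one because $f(1)=0$, and the smooth algebraic branch), hence a viscosity supersolution on $\R$, and the comparison principle applies on $(0, \infty) \times \R$.

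The central mechanism is the scaling identity: $\alpha\beta \geq \alpha + 1$ is precisely the threshold at which the algebraic reaction $rs^\beta$ evaluated on a tail of shape $Az^{-\alpha}$ is just dominated by the drift produced by a constant-speed translation of that tail. This is where hypothesis (\ref{alpha-beta-no-acc}) enters sharply and delivers the dichotomy announced in the introduction. The main technical nuisance I anticipate is the intermediate region $\{s_0 < \bar u < 1\}$, for which (\ref{nonlinearity-no-acc}) provides no information and one must pay with a speed $c$ depending on $A$, $B$, $s_0$ and on an unspecified bound for $f$ on $[s_0, 1]$; the Lipschitz corner of $\bar u$ is a secondary point, handled either by viscosity theory or by a routine mollification.
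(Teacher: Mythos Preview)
Your proof is correct and follows essentially the same approach as the paper: both construct a supersolution of the form $\min\bigl(1,\,K/z^{p}\bigr)$ with $z=x-ct+\text{const}$, verify the traveling-wave inequality first for large $z$ (where the bound \eqref{nonlinearity-no-acc} on $f$ applies) and then on the remaining compact $z$-interval by enlarging $c$, and conclude via the comparison principle. The only cosmetic difference is that you take the exponent $p=\alpha$, whereas the paper takes the critical value $p=1/(\beta-1)\le\alpha$, which makes the reaction and drift terms scale identically and renders the initial ordering $v(0,\cdot)\ge u_0$ slightly more direct (it needs only $K\ge C$, with no shift to tune).
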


On the one hand, for any  Allee effect $\beta >1$, one can find some
initial conditions with algebraic tail (whose power is large enough) so that the solutions do not
accelerate, as can be seen from \eqref{no-acc}.  On the other hand, for
any initial condition with algebraic tail, one can find some  Allee effect (strong enough) so that acceleration is
 cancelled. This is in sharp contrast with the KPP situation $\beta =1$ studied in \cite{Ham-Roq-10}.

Another difference with \cite{Ham-Roq-10} is concerned with heavy
tails that are lighter than algebraic ones, for which acceleration
is always cancelled whatever the strength of the  Allee effect.

\begin{cor}(Heavy tails lighter than algebraic)\label{cor:expracine} Let $\beta>1$ be arbitrary. Assume that for all $\alpha >0$, there are $C_\alpha >0$ and $x_0 ^\alpha>1$ such that
\begin{equation}\label{hyp-queues-exp-lente}
u_0(x)\leq \frac{C_\alpha}{x^{\alpha}}, \quad \forall
x\geq x_0 ^\alpha.
\end{equation}
Assume \eqref{nonlinearity-no-acc}. Then, for any $\lambda \in(0,1)$, the no acceleration result \eqref{no-acc} holds.
\end{cor}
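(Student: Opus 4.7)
The plan is that Corollary 2.3 follows directly from Theorem 2.2 by a judicious choice of the algebraic exponent $\alpha$. Hypothesis \eqref{hyp-queues-exp-lente} grants us an algebraic decay bound on $u_0$ for \emph{every} $\alpha>0$, so we have complete freedom to pick $\alpha$ in order to meet the compatibility constraint \eqref{alpha-beta-no-acc} of Theorem 2.2.

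Given the Allee exponent $\beta>1$, the constraint $\beta\geq 1+\frac{1}{\alpha}$ in \eqref{alpha-beta-no-acc} is equivalent to $\alpha\geq\frac{1}{\beta-1}$. Since $\beta>1$ makes the right-hand side a finite positive number, I would simply fix
$$
\alpha:=\frac{1}{\beta-1}>0,
$$
for which $\beta=1+\frac{1}{\alpha}$, so \eqref{alpha-beta-no-acc} holds as an equality. For this specific $\alpha$, hypothesis \eqref{hyp-queues-exp-lente} furnishes constants $C_\alpha>0$ and $x_0^\alpha>1$ such that $u_0(x)\leq C_\alpha/x^\alpha$ for all $x\geq x_0^\alpha$; this is exactly the content of \eqref{algebraic-no-acc}, with the roles of $C$ and $x_0$ played by $C_\alpha$ and $x_0^\alpha$ respectively.

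Since the nonlinearity hypothesis \eqref{nonlinearity-no-acc} is assumed in both statements with identical parameters $r$, $\beta$, $\delta$, $s_0$, all hypotheses of Theorem 2.2 are satisfied. Invoking that theorem yields a speed $c>0$ such that the no-acceleration conclusion \eqref{no-acc} holds for every $\lambda\in(0,1)$, which is the claim of the corollary.

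There is no real obstacle here: the corollary is a one-line consequence of Theorem 2.2, exploiting the fact that a tail lighter than every algebraic decay automatically satisfies the algebraic tail hypothesis of Theorem 2.2 for arbitrarily large $\alpha$. The only thing to verify is that $1/(\beta-1)$ is a finite positive number, which is immediate from $\beta>1$. The content of the corollary is therefore less about a new argument than about highlighting the qualitative consequence that \emph{any} Allee effect cancels acceleration for such intermediate heavy tails, in sharp contrast with the KPP case.
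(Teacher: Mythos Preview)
Your argument is correct and matches the paper's own proof essentially verbatim: the paper also observes that the corollary is obvious by selecting, for the given $\beta>1$, an $\alpha>0$ large enough that \eqref{alpha-beta-no-acc} holds, and then invoking Theorem~\ref{th:no-acc} via \eqref{hyp-queues-exp-lente}. Your explicit choice $\alpha=1/(\beta-1)$ is a clean way to make this precise.
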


The above result is independent on $\beta >1$ and is valid, among others, for initial data satisfying
\begin{equation}
\label{ex1}
u_0(x)\leq C e^{-a x /(\ln  x )}, \quad \forall
x\geq 2, \quad \text{ for some } C>0, a>0,
\end{equation}
or
\begin{equation}
\label{ex2}
u_0(x)\leq C e^{-a x^{b}}, \quad \forall x\geq 1, \quad
\text{ for some } C>0, a>0,\, 0<b<1.
\end{equation}
For such tails, any  Allee effect cancels the acceleration, whereas in the
KPP case acceleration always occurs \cite{Ham-Roq-10}. The proof of Corollary \ref{cor:expracine} is obvious: for a given $\beta
>1$, select a large $\alpha>0$ such that \eqref{alpha-beta-no-acc} holds,
and then combine \eqref{hyp-queues-exp-lente} with Theorem
\ref{th:no-acc}.

From Theorem \ref{th:no-acc} and Corollary \ref{cor:expracine},
\lq\lq the transition from no acceleration to acceleration'' seems
to take place for algebraic tails. This is confirmed by our next
main result, which is concerned with the case when the Allee
effect is not strong enough to prevent the acceleration induced by
algebraic tails.

\begin{theo}(Acceleration despite  Allee effect)\label{th:acc} Let $\alpha >0$ and $\beta >1$ be such that
\begin{equation}\label{alpha-beta-acc}
 \beta < 1+\frac 1 \alpha.
\end{equation}
Assume that there are $C>0$ and $x_0>1$ such that
\begin{equation}
\label{algebraic-acc}
u_0(x)\geq \frac{C}{x^\alpha},\quad \forall x\geq x_0.
\end{equation}
Assume that there are $r>0$, $\delta >0$ and $s_0\in(0,1)$ such that
\begin{equation}
\label{nonlinearity-acc}
f(s)\geq r s^\beta(1-s^\delta),\quad \forall 0\leq s\leq s_0.
\end{equation}
Then, for any $\lambda \in(0,1)$, any small $\ep>0$,  there is a
time $T_{\lambda,\ep}\geq t_\lambda$ such that
\begin{equation}
\label{lowerbound} E_\lambda(t)\subset (x^-(t),+\infty)\quad
\forall t\geq T_{\lambda,\ep}, \quad x^-(t):=\left((r-\ep)C^{\beta
-1} (\beta -1)t\right)^{\frac{1}{\alpha(\beta-1)}}.
\end{equation}
\end{theo}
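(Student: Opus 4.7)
The guiding heuristic is that for an initial datum with algebraic tail, the diffusion is negligible on the tail, so the evolution is essentially pointwise by the ODE $\partial_t v = f(v)$. Solving the simplified ODE $\partial_t v = (r-\ep_1) v^\beta$, which underestimates $f(v)$ for small $v$ by \eqref{nonlinearity-acc}, with initial profile $v(T_0,x)=\tilde C/x^\alpha$ yields the explicit formula
$$
w(t, x) := \left(\tilde C^{\,1-\beta}\, x^{\alpha(\beta-1)} - (r-\ep_1)(\beta-1)(t - T_0)\right)^{-1/(\beta-1)};
$$
after absorbing the auxiliary losses $\tilde C=(1-\ep_2)C$ and $\ep_1$ into the single parameter $\ep$ via $(1-\ep_2)^{\beta-1}(r-\ep_1)\geq r-\ep$, the $\mu$-level set of $w$ lies beyond the announced $x^-(t)$ for $t$ large.

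The plan is to use a truncated version of $w$ as a sub-solution. Fix $\mu\in(0,s_0)$ small enough that $r\mu^\delta\leq\ep_1$, and pick $T_0>0$ so that \eqref{invasion} yields $u(T_0,x)\geq\mu$ for $x\leq X_0:=\gamma T_0$; combining $\partial_t u\geq\partial_{xx}u$ with \eqref{algebraic-acc} and standard Gaussian asymptotics provides $u(T_0,x)\geq\tilde C/x^\alpha$ for $x\geq X_0$. Set $\bar w:=\min(w,\mu)$ on $\{t\geq T_0,\,x\geq X_0\}$, extended by $\mu$ for $x\leq X_0$. The sub-solution check reduces to $\partial_{xx}w\geq\partial_t w-f(w)$ on the region $\{w\leq\mu\}$: by construction $\partial_t w=(r-\ep_1)w^\beta$, and \eqref{nonlinearity-acc} together with $r\mu^\delta\leq\ep_1$ gives $\partial_t w-f(w)\leq(rw^\delta-\ep_1)w^\beta\leq 0$. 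The spatial inequality is the key point: the assumption $\alpha(\beta-1)<1$ from \eqref{alpha-beta-acc} makes the inner profile $x\mapsto\tilde C^{\,1-\beta}x^{\alpha(\beta-1)}$ concave, so that its composition with the decreasing convex function $z\mapsto z^{-1/(\beta-1)}$ is convex in $x$ and $\partial_{xx}w\geq 0$. Hence $\bar w$ is a (viscosity) sub-solution — as the minimum of two classical sub-solutions — and the parabolic comparison principle delivers $u\geq\bar w$ on $\{t\geq T_0,\,x\geq X_0\}$.

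To extract the level-set estimate, one compares powers: the abscissa $\tilde x(t)$ at which $\bar w$ crosses $\mu$ satisfies $(\tilde x(t))^{\alpha(\beta-1)}=\tilde C^{\beta-1}[\mu^{1-\beta}+(r-\ep_1)(\beta-1)(t-T_0)]$, which exceeds $(x^-(t))^{\alpha(\beta-1)}=(r-\ep)C^{\beta-1}(\beta-1)t$ for $t$ large by the choice of $\ep_1,\ep_2$. Hence $u(t,x)\geq\mu$ throughout $(-\infty,x^-(t)]$, combining \eqref{invasion} on $(-\infty,X_0]$ with the sub-solution on $[X_0,\tilde x(t)]$. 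For $\lambda<\mu$ this directly gives $u>\lambda$ there, and hence $E_\lambda(t)\subset(x^-(t),+\infty)$. For $\lambda\in[\mu,1)$ a supplementary upgrade is needed: comparing $u$ with the solution of \eqref{eq} starting at time $t_0:=t-S$ from the plateau data $\mu\chi_{(-\infty,\tilde x(t_0)]}$, and using \cite{Zla-05} to select $S=S(\lambda)$ so that this auxiliary solution exceeds $\lambda$ on $(-\infty,\gamma S]$, yields $u(t,x)\geq\lambda$ for $x\leq\tilde x(t-S)+\gamma S$; the superlinear growth $\tilde x(t)\sim t^{1/(\alpha(\beta-1))}$ (exponent strictly larger than one by \eqref{alpha-beta-acc}) ensures that $\tilde x(t-S)+\gamma S\geq x^-(t)$ for $t$ large, provided $\mu$ was chosen sufficiently small at the outset (so that $\mu^{1-\beta}>(r-\ep_1)(\beta-1)(S+T_0)$). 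The main difficulty is coordinating the parameters $T_0,\mu,\ep_1,\ep_2$ and the upgrade time $S$ so that all these estimates close up for every $\lambda\in(0,1)$; the conceptual core, however, is the convexity of $w$ in $x$, which is precisely what the assumption $\beta<1+1/\alpha$ ensures.
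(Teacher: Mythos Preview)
Your argument is correct and takes a genuinely different route from the paper's. Both proofs start from the same ODE profile
\[
w(t,x)=\bigl(C^{1-\beta}x^{\alpha(\beta-1)}-\rho(\beta-1)t\bigr)^{-1/(\beta-1)},
\]
and both finish with the same Zlato\v{s}-based upgrade from a small level $\mu$ to an arbitrary $\lambda\in(0,1)$. The difference lies in how the diffusion term is controlled when building the subsolution. The paper takes $v=\max(0,\,w-Aw^{1+\delta})$ and checks the subsolution inequality by computing $\partial_{xx}w=g(x)w^\beta+\beta h(x)w^{2\beta-1}$, showing that $g,h\to 0$ as $x\to\infty$; this forces the auxiliary hypothesis $\beta<1+\delta$, which must be removed at the end by a further reduction. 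Your observation is sharper: since $\alpha(\beta-1)<1$ by \eqref{alpha-beta-acc}, the inner profile $x\mapsto x^{\alpha(\beta-1)}$ is concave, and composing with the decreasing convex map $z\mapsto z^{-1/(\beta-1)}$ gives $\partial_{xx}w\geq 0$ outright. This lets you use the simpler subsolution $\bar w=\min(w,\mu)$ and dispenses entirely with the $\beta<1+\delta$ detour.

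Two minor remarks. First, your initial-time shift to $T_0$ with the Gaussian-asymptotics argument is unnecessary: the paper's reduction (by comparison) to the case $u_0(x)=C/x^\alpha$ for $x\geq x_0$ is cleaner and lets you start at $t=0$ with $\tilde C=C$ and no $\ep_2$-loss. Second, your phrase ``minimum of two classical sub-solutions'' is slightly imprecise, since $w$ is not a subsolution where $w>\mu$; but the standard gluing argument (each piece is a classical subsolution in its own open region, with matching values on the interface) is exactly what the paper uses for its $\max(0,\cdot)$ construction, so this is at the same level of rigor. Finally, the parameter coordination in your upgrade step closes without the extra constraint $\mu^{1-\beta}>(r-\ep_1)(\beta-1)(S+T_0)$ once you take the strict inequality $(1-\ep_2)^{\beta-1}(r-\ep_1)>r-\ep$, since the slack in the linear-in-$t$ term then absorbs the $S$- and $T_0$-dependent constants for $t$ large; this avoids the potential circularity between $\mu$ and $S(\lambda,\mu)$.
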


 Combining Theorem \ref{th:no-acc} and Theorem
\ref{th:acc}, we get a complete picture of the propagation
phenomenon. Indeed, in the $(\alpha,\beta)$ plane there is no
acceleration above or on the hyperbola $\beta=1+\frac 1 \alpha$.
On the other hand, strictly below the
 hyperbola acceleration occurs (see Figure \ref{hyperbola}).

\begin{figure}[!h]
\begin{center}
\includegraphics[width=0.5\linewidth]{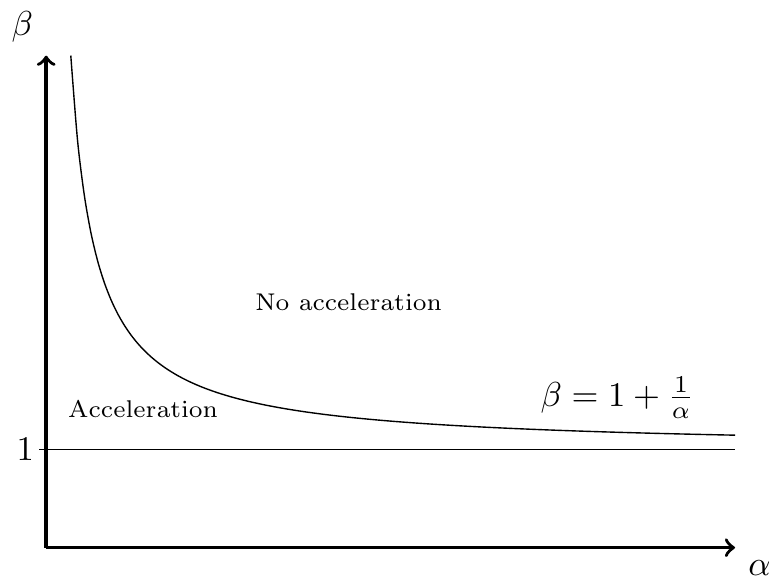}
\caption{$\beta$-Allee effect vs. $\alpha$-algebraic tail.} \label{hyperbola}
\end{center}
\end{figure}

As an immediate corollary of Theorem \ref{th:acc} we get that, for tails heavier than algebraic ones,
acceleration always occurs whatever the strength $\beta >1$ of the Allee effect. Typical examples of such tails are
\begin{equation}\label{lourd}
u_0(x)\geq \frac C{(\ln x)^{b}}, \quad \forall
x\geq 2, \quad \text{ for some } C>0, b>0.
\end{equation}

Our last result consists in providing upper bounds on the level sets of $u(t,x)$, when the algebraic tail is
 stronger than the Allee effect so that acceleration occurs. Combining with  the lower bounds  of Theorem \ref{th:acc}, this yields an accurate \lq\lq sandwich'' of the level sets.

\begin{theo}(Sandwich of the accelerating level sets)\label{th:sandwich} Let $\alpha >0$, $\delta >0$ and $\beta >1$ be such that \eqref{alpha-beta-acc} holds.
Assume that there are $C>0$, $\overline C>0$ and $x_0>1$ such that
\begin{equation}
\label{algebraic-accbis}
\frac{\overline C}{x^{\alpha}}\geq u_0(x)\geq \frac{C}{x^\alpha},\quad \forall x\geq x_0.
\end{equation}
Assume \eqref{nonlinearity-acc} and that there is $\overline r>0$ such that
\begin{equation}
\label{nonlinearity-accbis}
\overline r s^\beta\geq f(s),\quad \forall 0\leq s\leq 1.
\end{equation}
Then, for any $\lambda \in(0,1)$, any small $\ep>0$,  there is a time $\overline T_{\lambda,\ep}\geq t_\lambda$ such that
\begin{equation}
\label{levelset}
E_\lambda(t)\subset (x^-(t),x^{+}(t)),\quad \forall t\geq \overline T _{\lambda,\ep},
\end{equation}
where
$$
 x^-(t):=\left((r-\ep)C^{\beta -1}(\beta -1)t\right)^{\frac{1}{\alpha(\beta-1)}}, \quad x^+(t):=\left((\overline r+\ep)\overline C ^{\beta -1}(\beta -1)t\right)^{\frac{1}{\alpha(\beta-1)}}.
 $$
\end{theo}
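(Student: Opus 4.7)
The lower bound $E_\lambda(t)\subset(x^-(t),+\infty)$ is a direct consequence of Theorem~\ref{th:acc} applied with the lower bound \eqref{algebraic-accbis} on $u_0$ and \eqref{nonlinearity-acc} on $f$, so the whole content of the theorem is the upper bound $E_\lambda(t)\subset(-\infty,x^+(t))$. I would produce an explicit supersolution of \eqref{eq} derived from the purely reactive ODE $\dot v=(\overline r+\ep')v^\beta$ whose $\lambda$-level set lies to the left of $x^+(t)$.

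Given $\lambda\in(0,1)$ and $\ep>0$, first fix $\ep'\in(0,\ep)$ and $\tau>0$ small enough that $(\overline r+\ep')(1+\tau)^{\beta-1}<\overline r+\ep$ and set $\overline C_\ep:=\overline C(1+\tau)$; by \eqref{algebraic-accbis} one has $u_0(x)\le\overline C_\ep/x^\alpha$ for $x\ge x_0$. Define the candidate
$$
\overline v(t,x):=\frac{\overline C_\ep/x^\alpha}{\bigl(1-(\beta-1)(\overline r+\ep')(\overline C_\ep/x^\alpha)^{\beta-1}t\bigr)^{1/(\beta-1)}},
$$
so that $\partial_t\overline v=(\overline r+\ep')\overline v^\beta$, and put $W(t,x):=\min(1,\overline v(t,x))$. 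In view of \eqref{nonlinearity-accbis}, the supersolution inequality $\partial_t W-\partial_{xx}W\ge f(W)$ on $\{\overline v\le 1\}$ reduces to $\partial_{xx}\overline v\le\ep'\overline v^\beta$, while on $\{\overline v\ge 1\}$ the function $W\equiv 1$ is trivially a supersolution (since $f(1)=0$); hence $W$ is a supersolution everywhere once the pointwise inequality is verified on $\{\overline v\le 1\}$.

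The verification rests on differentiating the explicit expression for $\overline v$: $\partial_{xx}\overline v$ is a finite sum of terms of the form $x^{-2}$ times polynomials in $\overline v$ and $t\overline v^{\beta-1}$. The acceleration condition \eqref{alpha-beta-acc} forces $\alpha(\beta-1)<1<2$, so each such term is $o(\overline v^\beta)$ as $x\to+\infty$ uniformly on the strip $\overline v\le 1$. Since the region $\{\overline v(t,\cdot)<1\}$ lies to the right of the curve $\{x_1(t)\}$ defined by $\overline v(t,x_1(t))=1$ and $x_1(t)\to+\infty$, there exists $T_*\ge 0$ such that $\partial_{xx}\overline v\le\ep'\overline v^\beta$ throughout $\{\overline v<1\}$ for every $t\ge T_*$. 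Adjusting $W$ on a bounded region at time $T_*$ if necessary (using $0<u<1$ to ensure $u(T_*,\cdot)\le W(T_*,\cdot)$, which is easy since on $\{\overline v\ge 1\}$ one has $W=1$), the parabolic comparison principle yields $u(t,x)\le W(t,x)\le\overline v(t,x)$ for all $t\ge T_*$ and $x\in\R$.

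To conclude, solving $\overline v(t,x)=\lambda$ for $x$ produces the level curve
$$
x_\lambda(t)^{\alpha(\beta-1)}=\bigl(\overline C_\ep/\lambda\bigr)^{\beta-1}+(\beta-1)(\overline r+\ep')\overline C_\ep^{\beta-1}t,
$$
and the choice of $\ep',\tau$ at the start guarantees $x_\lambda(t)<x^+(t)$ for every $t$ beyond some $\overline T_{\lambda,\ep}\ge T_*$. For such $t$ one has $u(t,x^+(t))\le\overline v(t,x^+(t))<\lambda$; combining this with \eqref{zero-a-droite} gives $E_\lambda(t)\subset(-\infty,x^+(t))$, which together with Theorem~\ref{th:acc} closes the sandwich \eqref{levelset}. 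The main obstacle is the supersolution verification: the diffusive correction $\partial_{xx}\overline v$ must be uniformly absorbed into the enlarged reaction rate $\overline r+\ep'$ on the whole strip $\{\overline v\le 1\}$, which works only because \eqref{alpha-beta-acc} gives $\alpha(\beta-1)<1<2$; outside the acceleration regime this absorption would fail, in perfect agreement with Theorem~\ref{th:no-acc}.
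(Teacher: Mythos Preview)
Your overall strategy---build a supersolution from the ODE $\partial_t\overline v=(\overline r+\ep')\overline v^{\beta}$ with algebraic initial profile, then compare---is exactly the paper's. The verification that $\partial_{xx}\overline v\le \ep'\overline v^\beta$ on $\{\overline v<1\}$ for $x$ large is also correct and rests on the same computation (the paper writes $\partial_{xx}w=g(x)w^\beta+\beta h(x)w^{2\beta-1}$ with $g,h\to 0$).

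The genuine gap is the comparison step. You wait until a time $T_*$ at which $x_1(T_*)$ exceeds the threshold $x_*$ beyond which the diffusive term is absorbed, and then invoke comparison on $(T_*,\infty)\times\R$. But for that you need $u(T_*,\cdot)\le W(T_*,\cdot)$, and the problematic set is $\{\overline v(T_*,\cdot)<1\}=(x_1(T_*),+\infty)$, which is \emph{unbounded}; there is no a priori reason why $u(T_*,x)\le \overline v(T_*,x)$ on that half-line, and your parenthetical ``adjusting $W$ on a bounded region\ldots which is easy since on $\{\overline v\ge 1\}$ one has $W=1$'' addresses the wrong set (on $\{\overline v\ge 1\}$ nothing needs adjusting). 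With your choice $\overline C_\ep=\overline C(1+\tau)$ one typically has $x_1(0)=\overline C_\ep^{1/\alpha}<x_*$, so $T_*>0$ and the problem is real.

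The paper's remedy is to avoid the time shift entirely: it applies comparison from $t=0$ on the spatial half-line $(x_0,+\infty)$, after first enlarging $x_0$ so that $\vert g\vert+\beta\vert h\vert\le\ep/2$ on $[x_0,\infty)$ and then enlarging $\overline C$ so that $\overline C/x_0^\alpha=1$. This normalization makes the lateral boundary trivial, since $w(0,x_0)=1$ and $t\mapsto w(t,x_0)$ is nondecreasing, whence $\psi(t,x_0)=\min(1,w(t,x_0))=1\ge u(t,x_0)$; and at $t=0$ one has $w(0,x)=\overline C/x^\alpha\ge u_0(x)$ on $[x_0,\infty)$ by hypothesis. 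Your argument becomes correct once you enlarge $\overline C_\ep$ in the same way (so that $x_1(0)\ge x_*$, forcing $T_*=0$), rather than only by the factor $1+\tau$.
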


\medskip

The organization of the paper is as follows. In Section \ref{s:no-acceleration} we consider the situation where
 the Allee effect is stronger than the algebraic tail so that acceleration does not occur, that is we prove
  Theorem \ref{th:no-acc}. In Section \ref{s:acceleration} we consider the opposite situation, proving the acceleration as stated in
  Theorem \ref{th:acc}. Last, in Section \ref{s:upper}, we prove the upper estimates on the spreading of the level sets when accelerating, thus completing the proof of Theorem \ref{th:sandwich}.

\section{Cancelling acceleration by Allee effect }\label{s:no-acceleration}

In this short section, we prove Theorem \ref{th:no-acc}. The formal argument is very simple,
close to the rigorous proof and enlightening. We therefore take the liberty to present it: in order to
 block acceleration, we aim at finding a speed $c>0$ and a power $p>0$ such that $w(z):=\frac 1{z^p}$ is a supersolution of the associated traveling wave equation for $z>>1$, that is
$$
w''(z)+cw'(z)+f(w(z))\leq 0.
$$
In view of \eqref{nonlinearity-no-acc} this is enough to have
$$
\frac{p(p+1)}{z^{p+2}}-\frac{cp}{z^{p+1}}+\frac{r}{z^{p\beta}}\leq 0,
$$
for large $z>>1$, which requires $p+1\leq p\beta$, that is $\frac{1}{\beta-1}\leq p$. On the other
 hand we also need the ordering at initial time, which in view of \eqref{algebraic-no-acc}, requires
 $p\leq \alpha$. Hence one needs $\frac 1{\beta -1}\leq \alpha$, so that the hyperbola separation \eqref{alpha-beta-no-acc} arises very naturally. Let us now make this formal argument precise.

\medskip

We define
$$
p:=\frac{1}{\beta-1},\quad w(z):=\frac{K}{z^p} \quad \text{ for } z\geq z_0:=K^{1/p},
$$
where $K>1$.

\begin{lem}(Supersolutions traveling at constant speed)\label{lem:sursoltw} Let assumptions of Theorem \ref{th:no-acc} hold. Then, for any $K>1$, there is $c>0$ such that
$$
w''(z)+cw'(z)+f(w(z))\leq 0, \quad \forall z\geq z_0.
$$
\end{lem}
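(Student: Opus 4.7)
The plan is a direct verification of the ODE inequality on $[z_0,\infty)$, in which the speed $c$ is chosen a posteriori large enough in terms of $K$. I would first compute $w'(z)=-pK/z^{p+1}$ and $w''(z)=p(p+1)K/z^{p+2}$. The decisive algebraic feature of the ansatz is the identity $p\beta=p+1$ coming from $p=1/(\beta-1)$: it forces the drift term $cw'(z)$ and the reaction bound $rw(z)^{\beta}$ to scale like the same power $z^{-(p+1)}$, while $w''(z)$ is of order $z^{-(p+2)}$ and therefore negligible for $z$ large. This is precisely the threshold identified by the formal discussion preceding the lemma.

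The hypothesis \eqref{nonlinearity-no-acc} is only available for $s\in[0,s_0]$, but $w(z_0)=1$, so for $z$ close to $z_0$ the value $w(z)$ can be much larger than $s_0$. I would therefore begin by upgrading \eqref{nonlinearity-no-acc} to a uniform comparison $f(s)\leq R s^{\beta}$ on the whole of $[0,1]$. This is immediate: on $[0,s_0]$ use the hypothesis (dropping the nonpositive factor $-s^{\delta}$); on $[s_0,1]$ use continuity of $f$ together with $s^{\beta}\geq s_0^{\beta}$, so that the constant $R:=\max\{r,\; s_0^{-\beta}\max_{s\in[0,1]}f(s)\}$ works. Since $w(z)\in(0,1]$ for all $z\geq z_0$, this yields $f(w(z))\leq R w(z)^{\beta}=RK^{\beta}/z^{p\beta}=RK^{\beta}/z^{p+1}$.

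Inserting the derivative formulas and this bound into $w''+cw'+f(w)$ and factoring $1/z^{p+1}$ reduces the desired inequality to
$$\frac{p(p+1)K}{z}-cpK+RK^{\beta}\leq 0\qquad \text{for all } z\geq z_0.$$
Because $z\geq z_0=K^{1/p}$, the first summand is bounded above by $p(p+1)K^{1-1/p}$, so the inequality holds as soon as
$$c\;\geq\;\frac{p+1}{K^{1/p}}+\frac{R\,K^{\beta-1}}{p},$$
and any such $c>0$ will do. I expect no real obstacle: the argument is essentially an algebraic verification and the only mild subtlety is the passage from the near-zero hypothesis on $f$ to a comparison valid on all of $[0,1]$, which is handled at the outset by continuity together with $w\leq 1$ on $[z_0,\infty)$.
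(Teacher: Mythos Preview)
Your proof is correct and follows the same direct-verification strategy as the paper. The only difference is organizational: the paper splits the range $[z_0,\infty)$ into a far-field part $z\geq z_1:=(K/s_0)^{1/p}$, where $w(z)\leq s_0$ so that \eqref{nonlinearity-no-acc} applies directly, and a remaining compact piece $[z_0,z_2]$ on which $f(w(z))$ is bounded by $\|f\|_{L^\infty(0,1)}$ and $c$ is enlarged a second time. Your device of upgrading the near-zero bound to a global inequality $f(s)\leq R\,s^{\beta}$ on $[0,1]$ at the outset avoids this split and delivers the conclusion in a single step with an explicit admissible $c$. Both arguments hinge on the same key identity $p\beta=p+1$.
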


\begin{proof} In view of \eqref{nonlinearity-no-acc}, if $z\geq z_1:=\left(\frac{K}{s_0}\right)^{1/p}>z_0$ then $w(z)=\frac{K}{z^p}\leq s_0$ so that
$$
w''(z)+cw'(z)+f(w(z))\leq \frac{Kp(p+1)}{z^{p+2}}-\frac{cKp}{z^{p+1}}+\frac{rK^\beta}{z^{p\beta}}=\frac{Kp(p+1)}{z^{p+2}}-\frac{cKp-rK^{\beta}}{z^{p+1}}.
$$
Choosing $c>r(\beta -1)K^{\beta-1}$, the above is clearly negative
for $z$ large enough, say $z\geq z_2$. Last, on the remaining
compact region $z_0\leq z\leq z_2$, we have
\begin{eqnarray*}
w''(z)+cw'(z)+f(w(z))&=&\frac{Kp(p+1)}{z^{p+2}}-\frac{cKp}{z^{p+1}}+f(w(z))\\
&\leq& \frac{Kp(p+1)}{z_0 ^{p+2}}-\frac{cKp}{z_2^{p+1}}+\Vert f\Vert _{L^{\infty}(0,1)}\\
&\leq & 0
\end{eqnarray*}
by enlarging $c$ if necessary.
\end{proof}

We can now complete the proof of Theorem \ref{th:no-acc}. We select $K=\max(1,C)$, where $C>0$ is
  the constant that appears in \eqref{algebraic-no-acc}, and $c>0$ the associated speed given by the above lemma. Then
$$
v(t,x):=\min\left(1,w(x-x_0+1-ct)\right),
$$
is a supersolution of equation \eqref{eq}. Indeed, since 1 solves \eqref{eq} it is enough
 to deal with the region where $v(t,x)<1$, that is $z:=x-x_0+1-ct>z_0$, where it directly
  follows from the above lemma since $(\partial _t v-\partial _{xx}v-f(v))(t,x)=(-cw'-w''-f(w))(z)$.  Also we have
$$
v(0,x)=\min\left(1,\frac{K}{(x-x_0+1)^p}\right)\geq u_0(x),
$$
in view of $u_0\leq 1$, the assumption on the tail \eqref{algebraic-no-acc}, $K\geq C$ and $p=\frac{1}{\beta-1}\leq \alpha$. It follows from the comparison principle that
$$
u(t,x)\leq v(t,x)=\min\left(1,w(x-x_0+1-ct)\right).
$$
Now, let $\lambda\in(0,1)$ be given. In view of \eqref{nonvide}, for $t\geq t_\lambda$,  we can pick $x\in E_\lambda(t)$, and the above inequality enforces
$$
x\leq x_0-1+\left(\frac{K}{\lambda}\right)^{\beta-1}+ct\leq (c+1)t,
$$
for all $t\geq T_\lambda$, if $T_\lambda \geq t_\lambda$ is sufficiently large. This proves the
 upper bound in \eqref{no-acc}. The lower bound in \eqref{no-acc} being known since \eqref{nonvide}, this completes the proof of Theorem \ref{th:no-acc}. \qed

\section{Acceleration despite  Allee effect}\label{s:acceleration}

In this section, we analyze the situations where the algebraic tail is stronger than
 the Allee effect, in the sense of \eqref{alpha-beta-acc}, so that the solution accelerates. Namely,
 we prove Theorem \ref{th:acc}. Notice that, in view of \eqref{algebraic-acc} and the comparison principle, we only need to consider the case where
\begin{equation}
\label{algebraic-acc2}
u_0(x)= \frac{C}{x^\alpha},\quad \forall x\geq x_0.
\end{equation}

\subsection{An accelerating small bump as a subsolution}\label{ss:bump}

The main difficulty is to construct a subsolution which has the form of a small bump and travels to the
 right by accelerating. To do so in a KPP situation, the authors in \cite{Ham-Roq-10} consider a perturbation of the solution of $\frac{d}{dt} w(t,x)= r w(t,x)$ with
$w(0,x)=u_0(x)$ as initial data, where $x\in\R$ serves as a parameter. Guided by this approach, we shall rely --- in our degenerate situation---  on the
solution of $\frac{d}{dt} w(t,x)= r w^\beta(t,x)$ with
$w(0,x)=u_0(x)$ as initial data, where $x\in\R$ serves as a parameter. Computations are more involved, and it will turn out that
the higher order term of the nonlinearity --- typically of the form $f(s)=rs^\beta(1-s^\delta)$--- will play a role, so that we first need to assume \eqref{enplus}. We start with some preparations.

\medskip

Let $\ep>0$ small be given. We first make the additional assumption (to be removed in the end of the section)
\begin{equation}
\label{enplus}
\beta<1+\delta.
\end{equation}
We can therefore  select a $\rho>0$ such that
\begin{equation}
\label{def-rho}
\max\left(\frac{\beta r}{1+\delta},r-\ep\right)<\rho <r.
\end{equation}
Then define
\begin{equation}\label{def-w}
w(t,x):=\frac{1}{\left(\frac{1}{u_0^{\beta-1}(x)}-\rho (\beta -1)t\right)^{\frac{1}{\beta -1}}}\quad \text{ for } 0\leq t <T(x):=\frac{1}{\rho(\beta-1)u_0^{\beta-1}(x)},
\end{equation}
which solves
\begin{equation}
\label{edo-w}
\partial _t w(t,x)=\rho w^\beta (t,x),\quad w(0,x)=u_0(x).
\end{equation}
\begin{rem}
Notice that, as $x\to+\infty$ the interval of existence $(0,T(x))$ of the solution $w(t,x)$ becomes large since, in view of \eqref{algebraic-acc2},
$$
T(x)= \frac{x^{\alpha(\beta -1)}}{\rho(\beta-1)C^{\beta -1}}, \quad \forall x\geq x_0.
$$
Also, since $\alpha (\beta-1)<1$, we will have \lq\lq enough place'' to observe the
 acceleration phenomenon which, in some sense, is given by $x(t)\sim \mathcal O(t^{\frac{1}{\alpha(\beta-1)}})$ as $t\to \infty$, as can be seen in Theorem \ref{th:sandwich}.
\end{rem}
Straightforward computations yield
\begin{equation}
\label{calcullaplacien}
\partial _{xx} w(t,x)=g(x)w^\beta(t,x)+\beta h(x)w^{2\beta-1}(t,x),\end{equation}
 where
\begin{equation}\label{defgh}
g(x):=\frac{u_0''(x)}{u_0^\beta(x)}-\beta\frac{(u_0'(x))^2}{u_0^{\beta +1}(x)}, \quad h(x):=\frac{(u_0'(x))^2}{u_0  ^{2\beta}(x)}.
\end{equation}
In view of \eqref{algebraic-acc2} and $\beta<1+\frac 1 \alpha$, we see that both $g(x)$ and $h(x)$ tend to zero as $x\to +\infty$. Let us therefore select $x_1>x_0$ such that
\begin{equation}
\label{loin1}
\vert g(x)\vert +\beta \vert h(x)\vert \leq \frac{r-\rho}{2} \quad \text{ and }\quad
\vert g(x)\vert +(\delta+\beta) \vert h(x)\vert \leq \frac{\rho-\frac{r\beta}{1+\delta}}{2},\quad \forall x\geq x_1.
\end{equation}
Now, Assumption \ref{ass:initial} implies that
\begin{equation}
\label{def-kappa}
\kappa:=\inf _{x\in(-\infty,x_1)} u_0(x)\in(0,1] .
\end{equation}
Last, we select $A>0$ large enough so that
\begin{equation}
\label{def-A}
A>\max\left(\frac{1}{\kappa ^{\delta}}, \frac{2r}{1+\delta}\left(\rho-\frac{r\beta}{1+\delta}\right)^{-1}\right),
\end{equation}
and
\begin{equation}
\label{def-A2}
\frac{\delta}{1+\delta}\frac{1}{(A(1+\delta))^{1/\delta}}\leq s_0,
\end{equation}
where $s_0$ is as in \eqref{nonlinearity-acc}.
Equipped with the above material, we are now in the position to construct the desired subsolution.

\begin{lem}(An accelerating subsolution)
\label{lem:sub} Let assumptions of Theorem \ref{th:acc} hold. Then the function
$$
v(t,x):=\max\left(0, w(t,x)-Aw^{1+\delta}(t,x)\right)
$$
is a subsolution of equation \eqref{eq} in $(0,\infty)\times \R$.
\end{lem}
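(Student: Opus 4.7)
My plan is to verify that $V(t,x) := w(t,x) - A w^{1+\delta}(t,x)$ is a classical subsolution of \eqref{eq} on the open set $\Omega := \{(t,x)\in(0,\infty)\times\R : V(t,x)>0\}$, and then to invoke the standard fact that $v=\max(0,V)$ inherits the subsolution property on $(0,\infty)\times\R$ since $0$ is a stationary solution of \eqref{eq}. As a preliminary reduction, I would first note that for $x<x_1$ we have $u_0(x)\ge \kappa$ by \eqref{def-kappa}; the monotonicity of $t\mapsto w(t,x)$ (from \eqref{edo-w}) together with the choice of $A$ in \eqref{def-A} then force $Aw^\delta(t,x)\ge A\kappa^\delta\ge 1$, so $V\le 0$ and $v\equiv 0$ there. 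Hence it is enough to establish the subsolution inequality on $\Omega\cap\{x\ge x_1\}$, where the estimates \eqref{loin1} on $g$ and $h$ are available.

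On this set, differentiating \eqref{def-w} in $x$ yields $\partial_x w=(u_0'/u_0^\beta)w^\beta$, and hence $(\partial_x w)^2=h(x)w^{2\beta}$. Combining with \eqref{edo-w} and \eqref{calcullaplacien}, an elementary computation gives
\begin{equation*}
V_t-V_{xx}=(\rho-g)\,w^\beta+A(1+\delta)(g-\rho)\,w^{\beta+\delta}-\beta h\,w^{2\beta-1}+A(1+\delta)(\beta+\delta)h\,w^{2\beta-1+\delta}.
\end{equation*}
On $\Omega$ one has $V=w(1-Aw^\delta)\le \frac{\delta}{(1+\delta)(A(1+\delta))^{1/\delta}}\le s_0$ by \eqref{def-A2}, so \eqref{nonlinearity-acc} applies and $f(V)\ge rV^\beta-rV^{\beta+\delta}$. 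Bernoulli's inequality $(1-Aw^\delta)^\beta\ge 1-\beta Aw^\delta$, valid since $\beta\ge 1$ and $Aw^\delta\in[0,1]$, combined with the trivial $V\le w$, then yields
\begin{equation*}
f(V)\ge r\,w^\beta-r(1+\beta A)\,w^{\beta+\delta}.
\end{equation*}

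It thus suffices to prove $V_t-V_{xx}\le rw^\beta-r(1+\beta A)w^{\beta+\delta}$. Since $A\ge 1/\kappa^\delta\ge 1$ by \eqref{def-A}, we have $w\le A^{-1/\delta}\le 1$ throughout $\Omega$, and $\beta>1$ lets me bound $w^{2\beta-1+\delta}\le w^{\beta+\delta}$ while simply discarding the non-positive term $-\beta h w^{2\beta-1}$ (using $h\ge 0$). The target reduces to the non-positivity of a polynomial of the form $c_1 w^\beta+c_2 w^{\beta+\delta}$, and it is enough to show $c_1\le 0$ and $c_2\le 0$ separately. The first half of \eqref{loin1} gives $c_1\le \rho+(r-\rho)/2-r=-(r-\rho)/2<0$. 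For $c_2$, setting $\mu:=\rho-r\beta/(1+\delta)>0$, the second half of \eqref{loin1} yields $\rho-g-(\beta+\delta)|h|\ge \rho-\mu/2$, and a direct rearrangement shows that $c_2\le 0$ is equivalent to $A(1+\delta)\mu\ge 2r$, which is precisely the lower bound on $A$ imposed in \eqref{def-A}.

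The main obstacle will be the delicate algebraic bookkeeping in this last step: four distinct powers of $w$ appear in $V_t-V_{xx}-f(V)$, and the three free parameters (the speed $\rho$, the amplitude $A$, and the cut-off $x_1$ underlying \eqref{loin1}) must be tuned in concert so that every surviving coefficient has the correct sign. This is exactly why \eqref{def-rho} forces $\rho\in(\beta r/(1+\delta),\,r)$---an interval whose non-emptiness is precisely the temporary assumption \eqref{enplus}, $\beta<1+\delta$, which the author must eventually remove by a separate approximation argument. Once $V$ is known to be a classical subsolution on $\Omega$, the conclusion that $v=\max(0,V)$ is a subsolution of \eqref{eq} on all of $(0,\infty)\times\R$ follows from the standard max-of-subsolutions principle, which rests on $f(0)=0$ and the continuity of $v$ across $\partial\Omega$.
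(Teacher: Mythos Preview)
Your proof is correct and follows essentially the same route as the paper: reduce to the region where $V>0$ (which forces $x\ge x_1$ via \eqref{def-kappa} and \eqref{def-A}), use \eqref{edo-w}--\eqref{calcullaplacien} together with $(\partial_x w)^2=h\,w^{2\beta}$ to expand $\mathcal L V$, apply \eqref{nonlinearity-acc} and the Bernoulli inequality to bound $f(V)$ from below, and then use the two smallness conditions in \eqref{loin1} and the choice \eqref{def-A} to make each remaining coefficient non-positive. The only cosmetic differences are that you drop the term $-\beta h\,w^{2\beta-1}$ outright (using $h\ge 0$) whereas the paper absorbs it into the $w^\beta$-bracket via $w^{2\beta-1}\le w^\beta$, and that your phrase ``$c_2\le 0$ is equivalent to $A(1+\delta)\mu\ge 2r$'' should read ``is implied by'' since you have already passed to the upper bound $g+(\beta+\delta)h\le \mu/2$.
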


\begin{proof} Since 0 solves \eqref{eq} it is enough to consider the  $(t,x)$ for which $v(t,x)>0$. We therefore need to show
$$
\mathcal L v(t,x):=\partial _t v(t,x)-\partial _{xx}v(t,x)-f(v(t,x))\leq 0\quad \text{ when }\, v(t,x)=w(t,x)-Aw^{1+\delta}(t,x)>0.
$$
This implies in particular that $w(t,x)<1/A^{1/\delta}<\kappa$
so that $u_0(x)=w(0,x)\leq w(t,x)<\kappa$ since $t\mapsto w(t,x)$ is increasing. In view of the
 definition of $\kappa$ in \eqref{def-kappa}, this enforces $x\geq x_1$. As a result estimates \eqref{loin1} are
available. On the other hand $v(t,x)\leq \max_{0\leq w\leq
A^{1/\delta}}w-Aw^{1+\delta}=\frac{\delta}{1+\delta}\frac{1}{(A(1+\delta))^{1/\delta}}\leq
s_0$ by \eqref{def-A2}. Hence, it follows from \eqref{nonlinearity-acc} that
\begin{eqnarray*}
f(v(t,x))&\geq& rv^{\beta}(t,x)(1-v^{\delta}(t,x))\\
&\geq& rv^\beta(t,x)-rw^{\beta+\delta}(t,x)\\
&=& rw^{\beta}(t,x)(1-Aw^{\delta}(t,x))^{\beta}-rw^{\beta+\delta}(t,x).
\end{eqnarray*}
Then the convexity inequality $(1-Aw^\delta)^\beta\geq 1-A\beta w^\delta$ yields
$$
f(v(t,x))\geq rw^{\beta}(t,x)-rA\beta w^{\beta+\delta}(t,x)-rw^{\beta+\delta}(t,x).
$$
Using this, \eqref{edo-w}, \eqref{calcullaplacien}, computing $\partial _t w^{1+\delta}(t,x)$ and $\partial _{xx} w^{1+\delta}(t,x)$, we arrive at
\begin{align*}
&\mathcal L v(t,x)\leq\rho w^{\beta}(t,x)-g(x)w^{\beta}(t,x)-\beta h(x)w^{2\beta-1}(t,x)-A\rho(1+\delta)w^{\beta+\delta}(t,x)\\
&\quad \quad \quad \quad +A(1+\delta)g(x)w^{\beta+\delta}(t,x)+A(1+\delta)(\delta+\beta)h(x)w^{2\beta+\delta-1}(t,x)\\
&\quad \quad \quad\quad -rw^{\beta}(t,x)+rA\beta w^{\beta+\delta}(t,x)+rw^{\beta+\delta}(t,x)\nonumber.
\end{align*}
Since $0\leq w<1$ we have $w^{2\beta-1}\leq w^\beta$ and $w^{2\beta+\delta -1}\leq w^{\beta +\delta}$, so that
\begin{align*}
&\mathcal L v(t,x)\leq w^{\beta}(t,x)\left[\rho-r+\vert g(x)\vert +\beta\vert h(x)\vert\right]\\
&\quad \quad \quad\quad +w^{\beta+\delta}(x)\left[A(1+\delta)\left(-\rho+\frac{r\beta}{1+\delta}+\vert g(x)\vert+(\delta+\beta)\vert h(x)\vert\right)+r\right].
\end{align*}
The first inequality in \eqref{loin1} implies $\rho -r+\vert g(x)\vert +\beta\vert h(x)\vert\leq \frac{\rho -r}{2}\leq 0$ and, using the second inequality in \eqref{loin1}, we get
$$
\mathcal  L v(t,x)\leq w^{\beta+\delta}(x)\left[A(1+\delta)\frac{-\rho+\frac{r\beta}{1+\delta}}{2}
 +r\right]\leq 0,
 $$
 thanks to \eqref{def-A}. Lemma \ref{lem:sub} is proved.
\end{proof}

Since $v(0,x)=\max(0,u_0(x)-Au_0^{1+\delta}(x))\leq u_0(x)$, we deduce from the comparison principle that
\begin{equation}
\label{comparison}
u(t,x)\geq v(t,x)=\max(0,w(t,x)-Aw^{1+\delta}(t,x)),\quad \forall (t,x)\in [0,\infty)\times \R.
\end{equation}

\subsection{Lower bounds on the level sets}\label{ss:lower}

{\bf Proof of \eqref{lowerbound} for small $\lambda$, under assumption \eqref{enplus}.} Equipped with the
 above subsolution, whose role is to \lq\lq lift'' the solution $u(t,x)$ on intervals that enlarge with
 acceleration,  we first prove the lower bound \eqref{lowerbound} on the level sets $E_\lambda(t)$ when $\lambda$ is small.

Let us fix
$$
0<\theta<1/A^{1/\delta}.
$$
We  claim that, for any $t\geq 0$, there is a unique $y_\theta (t)\in \R$ such that
$w(t,y_\theta(t))=\theta$, and moreover $y_\theta(t)$ is given by
\begin{equation}
\label{def-ytheta}
y_\theta(t):=\left((\frac C\theta)^{\beta-1}+\rho C^{\beta -1}(\beta -1)t\right)^{\frac{1}{\alpha(\beta-1)}}.
\end{equation}
Indeed, since $\theta<1/A^{1/\delta}<\kappa=\inf _{x\in(-\infty,x_1)}u_0(x)$ and since $w(t,x)\geq w(0,x)=u_0(x)$,
for $w(t,y)=\theta$ to hold one needs $y\geq x_1$. But, when $y\geq x_1>x_0$, one can use formula \eqref{algebraic-acc2}
and then solve equation $w(t,y)=\theta$, thanks to expression \eqref{def-w}, to find the unique solution \eqref{def-ytheta}.

Let us now define the open set
$$
\Omega :=\{(t,x), t>0, x<y_\theta (t)\}.
$$
Let us evaluate $u(t,x)$ on the boundary $\partial \Omega$.
For $t>0$, it follows from \eqref{comparison} that
$$
u(t,y_\theta(t))\geq w(t,y_\theta (t))-Aw^{1+\delta}(t,y_\theta (t))=
\theta -A\theta ^{1+\delta}>0.
$$
On the other hand, for $t=0$ and $x\leq y_\theta (0)=(\frac C \theta)^{1/\alpha}$, we have
$$
u(0,x)\geq \inf _{x\leq (\frac C \theta)^{1/\alpha}}u_0(x)>0,
$$
in view of Assumption \ref{ass:initial}. As a result $\Theta:=\inf _{(t,x)\in \partial\Omega}u(t,x)>0$. Since $\Theta >0$
is a subsolution for equation \eqref{eq}, it follows from the comparison principle that
\begin{equation}
\label{etoile}
u(t,x)\geq \Theta, \quad \forall t\geq 0, \forall x\leq y_\theta(t).
\end{equation}
This implies in particular that, for any $0<\lambda <\Theta$, we have, for all $t\geq t_\lambda$,
\begin{equation}
\label{small-levelset} \emptyset \neq E_\lambda (t)\subset
(y_\theta(t),+\infty)\subset (x^{-}_\rho (t),+\infty),\quad
 x^{-}_\rho (t):=\left(\rho C^{\beta -1}(\beta -1)t\right)^{\frac{1}{\alpha(\beta-1)}},
 \end{equation}
which implies the lower bound \eqref{lowerbound} since $\rho
>r-\ep$. \qed

 \medskip

\noindent {\bf Proof of \eqref{lowerbound} for any $\lambda\in(0,1)$, under assumption \eqref{enplus}.}
 Let us now turn to the the case where $\lambda$ is larger than $\Theta$. Let
 $\Theta\leq \lambda <1$ be given. Let us denote by $v(t,x)$ the solution of \eqref{eq} with initial data
 \begin{equation}\label{initial-data-v}
v_0(x):= \begin{cases} \Theta &\text{
if } x\leq -1
\\
-\Theta x &\text{ if } -1<x<0\\
0 &\text{ if  }x\geq 0.
\end{cases}
\end{equation}
It follows from \cite[Theorem 1.1]{Zla-05} that $
\lim _{t\to\infty}\inf_{x\leq \gamma _1 t} v(t,x)=1$,
for some $\gamma _1 >0$. In particular there is a time $\tau_{\lambda,\ep}>0$ (this time depends on
$\theta$ and therefore on $\ep$ from the above construction of the small bump subsolution) such that
\begin{equation}
\label{v-grand}
 v(\tau_{\lambda,\ep},x)>\lambda, \quad \forall x\leq 0.
 \end{equation}

 On the other hand, it follows from \eqref{etoile} and the definition \eqref{initial-data-v}
 that
 $$
 u(T,x)\geq v_0(x-y_\theta(T)),\quad \forall T\geq 0, \forall x\in \R,
 $$
 so that the comparison principle yields
 $$
 u(T+\tau,x)\geq v(\tau,x-y_\theta(T)),\quad \forall T\geq 0, \forall \tau \geq 0, \forall x\in \R.
 $$
 In view of \eqref{v-grand}, this implies that
 $$
 u(T+\tau_{\lambda,\ep},x)>\lambda,\quad \forall T\geq 0, \forall x\leq y_\theta (T).
 $$
 Hence, for any $t\geq T^{1}_{\lambda,\ep}:=\max (\tau_{\lambda,\ep},t_\lambda)$, if we pick
  a $x\in E_\lambda(t)$ then the above implies $x>y_\theta(t-\tau_{\lambda,\ep})$, that is
 $$
 x>\left((\frac C\theta)^{\beta-1}-\rho C^{\beta-1}(\beta-1)\tau _{\lambda,\ep}+\rho C^{\beta -1}(\beta -1)t\right)^{\frac{1}{\alpha(\beta-1)}}\geq \left((r-\ep)C^{\beta-1}(\beta-1)t\right)^{\frac{1}{\alpha(\beta-1)}},
 $$
for all  $t\geq T_{\lambda,\ep}$, with $T_{\lambda,\ep}>0$  sufficiently large (recall that $\rho>r-\ep$).
This proves the lower bound \eqref{lowerbound} when $\Theta\leq \lambda<1$ and concludes the proof of Theorem \ref{th:acc}, under the additional assumption \eqref{enplus}. \qed

\medskip

\noindent {\bf Relaxing the additional assumption \eqref{enplus}.} When $\beta<1+\delta$ does not hold, let us pick $\delta ^{*}>0$ such that $\beta <1+\delta ^{*}$ and define $r^{*}:=r-\frac \ep 2$. It follows from \eqref{nonlinearity-acc} that there is $s_0^{*}\in(0,1)$ such that
$$
f(s)\geq r^{*}s^{\beta}(1-s^{\delta ^{*}}), \quad \forall 0\leq s\leq s_0^{*}.
$$
Hence, from the above analysis, \eqref{lowerbound} is available
with $r^{*}$ in place of $r$. This concludes the proof of Theorem
\ref{th:acc}. \qed

\section{Upper bounds on the level sets when acceleration}\label{s:upper}

In this section, we sandwich the level sets $E_\lambda(t)$ of the
solution $u(t,x)$ when acceleration occurs, namely we prove
Theorem \ref{th:sandwich}. In view of Theorem \ref{th:acc}, it
only remains to prove the upper estimate in \eqref{levelset}.

\medskip

Let $\lambda \in(0,1)$ and  $\ep>0$ small be given.
Up to enlarging $x_0>1$ which appears in \eqref{algebraic-accbis}, we can assume without loss of generality that
\begin{equation}
\label{xzerogrand}
\frac{\alpha(\alpha+1+2\beta\alpha)}{x_0^2}\leq\frac \ep 2,
\end{equation}
and $\frac{\overline C}{x_0^{\alpha}}<1$. Then, up to enlarging $\overline C>0$ which also appears in \eqref{algebraic-accbis}, we can assume without loss of generality that
\begin{equation}
\label{Cbargrand}
\frac{\overline C}{x_0^{\alpha}}=1.
\end{equation}
Now, for these $x_0>1$ and $\overline C>0$, in view of \eqref{algebraic-accbis} and the comparison principle, it is enough to prove the upper bound in \eqref{levelset} when
\begin{equation}
\label{algebraic-acc3}
u_0(x)= \frac{\overline C}{x^\alpha},\quad \forall x\geq x_0.
\end{equation}

Let us select
$$
\rho:=\overline r+\frac \ep 2.
$$
We then define
$$
\psi(t,x):=\min\left(1, w (t,x):=\frac{1}{\left(\frac{1}{u_0^{\beta-1}(x)}-\rho (\beta -1)t\right)^{\frac{1}{\beta -1}}}\right),
$$
where $w(t,x)$ is as in \eqref{def-w}.
We claim that $\psi$ is a supersolution for equation \eqref{eq} in the domain $\Omega:=(0,\infty)\times(x_0,+\infty)$.
Indeed, since $1$ solves \eqref{eq}, it suffices to consider the points $(t,x)$ where $\psi(t,x)=w(t,x)<1$. In view of
$$
\partial _t w(t,x)=\rho w ^\beta (t,x)=(\overline r+\frac \ep 2)w ^\beta(t,x),
$$
and inequality \eqref{nonlinearity-accbis}, some straightforward computations yield
\begin{align}
 &\partial _t w(t,x)-\partial _{xx}w(t,x)-f(w(t,x))\nonumber\\
&\quad \quad \quad\quad\geq \frac \ep 2 w ^\beta(t,x)-g(x)w^\beta (t,x)-\beta h(x) w ^{2\beta -1}(t,x)\nonumber\\
&\quad \quad \quad\quad\geq w ^\beta(t,x)\left( \frac{\ep}{2}-\vert g(x)\vert -\beta \vert h(x)\vert \right)\label{qqch},
\end{align}
since $0<w(t,x)<1$, and where $g(x)$ and $h(x)$ were defined in \eqref{defgh}, that is
\begin{equation*}\label{defgh2}
g(x):=\frac{u_0''(x)}{u_0^\beta(x)}-\beta\frac{(u_0'(x))^2}{u_0^{\beta +1}(x)}, \quad h(x):=\frac{(u_0'(x))^2}{u_0  ^{2\beta}(x)}.
\end{equation*}
In view of expression \eqref{algebraic-acc3}, some straightforward
computations yield that, for any $x\geq x_0$,
\begin{align*}
 &\vert g(x)\vert +\beta \vert h(x)\vert \\
&\quad \quad \quad\quad\leq\frac{\alpha(\alpha +1)}{\overline C ^{\beta-1}x^{\alpha +2-\alpha \beta}}+\beta \left(\frac{\alpha ^{2}}{\overline C^{\beta -1}x^{\alpha +2-\alpha \beta}}+\frac{\alpha^{2}}{\overline C ^{2\beta -2}x^{2\alpha +2-2\alpha\beta}}\right)
\\
&\quad \quad \quad\quad\leq\frac{\alpha(\alpha +1)}{\overline C ^{\beta-1}x_0^{\alpha +2-\alpha \beta}}+\beta \left(\frac{\alpha ^{2}}{\overline C^{\beta -1}x_0^{\alpha +2-\alpha \beta}}+\frac{\alpha^{2}}{\overline C ^{2\beta -2}x_0^{2\alpha +2-2\alpha\beta}}\right),
\end{align*}
since both $\alpha +2-\alpha\beta$ and $2\alpha+2-2\alpha\beta$ are positive thanks to $\beta<1+\frac{1}{\alpha}$. Now in view of \eqref{Cbargrand}, we get
$$
\vert g(x)\vert +\beta \vert h(x)\vert\leq \frac{\alpha(\alpha +1)}{x_0^{2}}+\beta\left(\frac{\alpha^{2}}{x_0^2}+\frac{\alpha ^2}{x_0^2}\right)\leq \frac \ep 2,
$$
in virtue of \eqref{xzerogrand}. It therefore follows from \eqref{qqch}  that, for any $(t,x)\in \Omega$ such that $w(t,x)<1$,
$$
\partial _t w(t,x)-\partial _{xx}w(t,x)-f(w(t,x))
\geq 0,
$$
which proves our claim that $\psi$ is a supersolution of \eqref{eq} in $\Omega$.

Let us now have a look at the boundary $\partial \Omega=\{0\}\times[x_0,+\infty) \cup (0,\infty)\times\{x_0\}$.
For $t=0$, $x\geq x_0$, we have $w(0,x)=u_0(x)=u(0,x)$, whereas for $t>0$, $x=x_0$ we have $w(t,x_0)=\frac{1}{(1-\rho(\beta-1)t)^{\frac{1}{\beta-1}}}\geq 1\geq u(t,x_0)$.
Hence $\psi(t,x)\geq u(t,x)$ for any $(t,x)\in\partial \Omega$. We deduce from the comparison principle that
\begin{equation}\label{comparison2}
u(t,x)\leq\psi(t,x)\leq w(t,x),\quad \forall (t,x)\in [0,\infty)\times [x_0,+\infty).
\end{equation}

Now, we can define $T_{\lambda,\frac r 2}\geq t_\lambda$ as in the conclusion \eqref{lowerbound} of Theorem \ref{th:acc}.
For $t\geq T_{\lambda,\frac r 2}$, let us pick a $x\in E_\lambda (t)$. We know from \eqref{lowerbound} that
$x\geq \left(\frac r 2C^{\beta -1} (\beta-1)t\right)^{\frac{1}{\alpha(\beta -1)}}\to +\infty$ as $t\to \infty$ so, up to
 enlarging $T_{\lambda, \frac r 2}$, we can assume that $x\geq x_0$. It therefore follows from \eqref{comparison2} that
$w(t,x)\geq \lambda$ which, using the expression for $w(t,x)$ transfers into
\begin{align*}
x\leq& \left( (\frac{\overline C}{\lambda})^{\beta-1}+(\overline r +\frac
\ep 2)\overline C^{\beta -1}(\beta -1)t\right)^{\frac{1}{\alpha(\beta-1)}}\\
<& \left((\overline r +
\ep )\overline C^{\beta -1}(\beta -1)t\right)^{\frac{1}{\alpha(\beta-1)}}=:x^+(t),
\end{align*}
for $t\geq \overline T_{\lambda,\ep}$, with $\overline T _{\lambda,\ep}\geq T_{\lambda, \frac r 2}$ chosen sufficiently large.
 This proves the upper bound in \eqref{levelset} and therefore concludes the proof of Theorem \ref{th:sandwich}. \qed

\bigskip

\noindent \textbf{Acknowledgement.} The author thanks J.
Coville (for suggesting the issue under consideration), A.
Ducrot (for pointing reference \cite{Zla-05}),  G. Faye and J.-M. Roquejoffre (for pointing reference \cite{Gui-Hua-preprint}), and Q. Griette (for making Figure \ref{hyperbola}).

\bibliographystyle{siam}    
\bibliography{biblio}

\end{document}